\documentclass[11pt]{amsart}

\usepackage{amsfonts,amsmath,amsthm,amscd,amssymb,latexsym,mathrsfs, enumitem}
\usepackage[english]{babel}

\usepackage[margin=1in]{geometry}

\theoremstyle{plain}

\newtheorem{theorem}[subsection]{Theorem}
\newtheorem{lemma}[subsection]{Lemma}

\newtheorem{proposition}[subsection]{Proposition}
\newtheorem{corollary}[subsection]{Corollary}

\newtheorem{definition}[subsection]{Definition}

\newtheorem{question}[subsection]{Question}

\makeatletter
\@addtoreset{equation}{section}
\@addtoreset{figure}{section}
\@addtoreset{table}{section}
\makeatother

\sloppy
\tolerance=3000

\numberwithin{equation}{section}
\newcommand\RRR{\mathbb{R}}
\newcommand\ZZZ{\mathbb{Z}}
\newcommand\NNN{\mathbb{N}}

\newcommand\Bo{\mathrm{B}_1}
\newcommand\hBo{\mathrm{hB}_1}

\newcommand\cB[1]{\Bo(#1)}
\newcommand\hcB[1]{\hBo(#1)}

\begin{document}
\title{The first homotopic Baire class of maps with values in ANR's coincides with the first Baire class}
\author{Olena Karlova}
 
\author{Sergiy Maksymenko}

\address{Yurii Fedkovych Chernivtsi National University, Chernivtsi, Ukraine}

\address{Institute of Mathematics of National Academy of Sciences, Kyiv, Ukraine}

\begin{abstract}
We introduce the first homotopic Baire class of maps as a homotopical counterpart of a usual first Baire class of maps between topological spaces and show that those classes with values in ANR spaces coincide.
\end{abstract}

\maketitle

\section{Introduction}

Let $X$ and $Y$ be topological spaces.

\begin{definition}
{\rm A map $f:X\to Y$ belongs to {\it the first Baire class} or is {\it a Baire-one map} \cite{Baire1}, if there exists a sequence $(f_n)_{n\in\omega}$ of continuous maps converging to $f$ pointwisely on $X$, that is
  \begin{gather}\label{gath:1}
    \lim_{n\to\infty} f_n(x)=f(x)
  \end{gather}
  for every $x\in X$.}
\end{definition}


There are many characterizations of real-valued Baire-one functions. For example, the classical results of Baire, Banach, Lebesgue and Hausdorff (see~\cite{Baire1}, \cite{BanachS:1931}, \cite{Leb905}, \cite[Theorem 24.10]{Kechris}) state that for a real valued function $f$ defined on a Banach space $X$ the following conditions are equivalent: (i) $f$ is a Baire-one function; (ii) for every non-empty closed subset $K\subseteq X$, the restriction $f|_{K}$ has a point of continuity relative to the topology of $K$; (iii) $f$ is $F_\sigma$-measurable (i.e., for any open set $V\subseteq \mathbb R$ the preimage $f^{-1}(V)$ is of the type $F_\sigma$ in $X$).

These results were generalized by many mathematicians in different ways (see \cite{Hansell:1974},\cite{Fos},\cite{ATZ}  and the literature given there). The aim of the present paper is to introduce and study a homotopy counterpart of Baire-one maps.

\begin{definition}
  \rm
We will say that a map $f:X\to Y$ belongs to {\it the first homotopic Baire class} or is {\it a homotopical Baire-one map}, if there exists a sequence $(f_n)_{n\in\omega}$ of pairwise homotopic continuous maps $f_n:X\to Y$ such that $\lim\limits_{n\to\infty}f_n(x)=f(x)$ for every $x\in X$.

Equivalently, $f:X \to Y$ is homotopical Baire-one map if and only if there exists a homotopy $H:X \times [0,+\infty)\to Y$ such that
\begin{gather}\label{gath:2}
  \lim_{n\to\infty} H(x,n)=f(x)
\end{gather}
for every $x\in X$.
In this case $H$ will be called a {\it $\Bo$-homotopy for $f$}.
\end{definition}
The collection of all (resp. homotopic) Baire-one maps between $X$ and $Y$ will be denoted by $\cB{X,Y}$ (resp. $\hcB{X,Y}$).
Evidently,
$$
\hcB{X,Y} \ \subseteq \ \cB{X,Y}
$$
for any spaces $X$ and $Y$.
We are interested in the difference between these classes.

Our main result  (Theorem~\ref{th:X_YANR}) shows that for a topological space  $X$ and for $Y$ being ANR we have that $\hcB{X,Y}=\cB{X,Y}$.
This means that if a map $f:X \to Y$ for such spaces can be pointwise approximated by a sequence of continuous maps, then one can assume that this sequence consists of pairwise homotopic maps.

\subsection{Examples}
In general the sequence $(f_n)$ from~\eqref{gath:1} which pointwise approximates $f:X \to Y$ may consists of distinct homotopy classes.
The following example shows that every continuous self-map of the circle can be poXint-wise approximated by a sequence of continuous maps belonging to any given sequence of homotopy classes.
This illustrates that homotopy invariants are not preserved by pointwise convergence, and motivates introduction of first homotopy Baire classes.

Let $S^1 = \{z\in\mathbb C: |z|=1 \}$ be the unit circle in the complex plane also regarded as the quotient $[0,1] / \{0,1\}$.

For each $n\in \ZZZ$ and $\alpha\in(0,1)$ define the function $f_{n,\alpha}:[0,1]\to\RRR$ by
\[
f_{n,\alpha}(t) =
\begin{cases}
t & t \leq 1-\alpha, \\
n + (t-1) \frac{n-1+\alpha}{\alpha}, & t\in[1-\alpha, 1].
\end{cases}
\]
Since $f_{n,\alpha}(1)=n$ is an integer value, we get a well defined map $g_{n,\alpha}:S^1\to S^1$
\[
g_{n,\alpha}(t) = e^{2\pi i f_{n,\alpha}(t)}.
\]

Geometrically, $g_{n,\alpha}$ is fixed when the angle $t$ changes from $0$ to $1-\alpha$, and for the rest of parameters $[1-\alpha,1]$ it wraps around $S^1$ to make full $n$ rotations.
In particular, $\deg g_{n,\alpha} = n$.
The proof of the following statement is easy and we leave it to the reader.
\begin{proposition}\label{prop:ex}
Let $(\alpha_{i}) \subset (0,1)$ be any sequence such that $\lim\limits_{i\to\infty} \alpha_i = 1$, and $(n_i) \subset \ZZZ$ be any sequence of integers.
Then the sequence of maps $g_{n_i,\alpha_i}:S^1\to S^1$ converges to the identity map $\mathrm{id}_{S^1}$ pointwise on $S^1$.
Moreover, this sequence even stabilizes, i.e. for each $\beta\in(0,1)$ there exists $N\in \NNN$ such that $g_{n_i,\alpha_i}$ is fixed on $[0,\beta]$ for all $i>N$.

More generally, if $X$ and $Y$ are topological spaces and $p: X \to S^1$ and $q:S^1\to Y$ are continuous maps, then the sequences $g_{n_i,\alpha_i} \circ p:X\to S^1$ and $q \circ g_{n_i,\alpha_i}:S^1 \to Y$ pointwise converge to $p$ and $q$ respectively.
\qed
\end{proposition}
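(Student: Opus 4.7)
The heart of the proposition is the stabilization assertion; pointwise convergence on $S^1$ and both composition statements follow from it by one-line arguments. The plan is to first unpack the definition of $f_{n,\alpha}$ on the subinterval $[0,1-\alpha]$, deduce stabilization from the hypothesis on $(\alpha_i)$, and then chain the elementary consequences.

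The key observation is that on $[0,1-\alpha]$ the first branch of the piecewise formula reduces to $f_{n,\alpha}(t)=t$, an expression independent of both $n$ and $\alpha$. Consequently $g_{n,\alpha}\bigl(e^{2\pi it}\bigr)=e^{2\pi it}$ for every $t\in[0,1-\alpha]$ and for all parameters $n$ and $\alpha$. To prove stabilization, fix $\beta\in(0,1)$ and use the hypothesis on $(\alpha_i)$ to produce $N$ such that $[0,\beta]\subseteq[0,1-\alpha_i]$ for all $i>N$. Then each $g_{n_i,\alpha_i}$ with $i>N$ restricts to the identity on the arc $\{e^{2\pi it}:t\in[0,\beta]\}$, which is exactly the stabilization claim.

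Pointwise convergence on $S^1$ is then immediate. Any point $z=e^{2\pi it}$ with $t\in[0,1)$ lies in the arc $\{e^{2\pi is}:s\in[0,\beta]\}$ for some $\beta<1$, so stabilization forces $g_{n_i,\alpha_i}(z)=z$ for all sufficiently large $i$. The basepoint $z=1$ (corresponding to $t=0\sim t=1$) is trivial, since $f_{n,\alpha}(0)=0$ and $f_{n,\alpha}(1)=n\in\ZZZ$ both yield $g_{n,\alpha}(1)=1$ for every choice of parameters.

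For the composition claims, $g_{n_i,\alpha_i}\circ p\to p$ is obtained by evaluating pointwise convergence of $g_{n_i,\alpha_i}$ at the point $p(x)\in S^1$ for each $x\in X$, while $q\circ g_{n_i,\alpha_i}\to q$ follows by feeding the already-established convergence $g_{n_i,\alpha_i}(z)\to z$ into the continuous map $q$. I anticipate no genuine obstacle here: the proposition amounts to the observation that the ``wrap $n$ times'' portion of $g_{n,\alpha}$ is confined to an arc that the hypothesis on $(\alpha_i)$ eventually pushes off any prescribed compact subarc of $S^1\setminus\{1\}$, so only the identity portion is seen at each individual point in the limit.
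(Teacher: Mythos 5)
Your overall plan is the intended one (the paper leaves the proof to the reader, so there is no written proof to compare against): observe that $g_{n,\alpha}$ is the identity on the arc parametrized by $[0,1-\alpha]$, deduce stabilization on $[0,\beta]$ once that arc contains $[0,\beta]$, handle the base point separately, and get the two composition statements formally. However, your key step contains a genuine slip: you claim that the hypothesis $\lim_{i\to\infty}\alpha_i=1$ yields an $N$ with $[0,\beta]\subseteq[0,1-\alpha_i]$ for all $i>N$. That inclusion says $\alpha_i\le 1-\beta$, which follows from $\alpha_i\to 0$, not from $\alpha_i\to 1$; under the literal hypothesis $1-\alpha_i\to 0$, so for large $i$ the inclusion fails for every fixed $\beta\in(0,1)$, and likewise the ``wrap'' arc $\{e^{2\pi i t}:t\in[1-\alpha_i,1]\}$ is not pushed off a fixed subarc but grows to fill the circle. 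In fact, with the paper's definition of $f_{n,\alpha}$ the statement as literally printed is false: take $n_i=0$, $\alpha_i=1-\tfrac1i$ and the point $z=-1$ (i.e.\ $t=\tfrac12$); for $i\ge 3$ one has $t>1-\alpha_i$ and $f_{0,\alpha_i}(\tfrac12)=\tfrac{1-\alpha_i}{2\alpha_i}\to 0$, so $g_{n_i,\alpha_i}(-1)\to 1\neq -1$, and more generally the pointwise limit depends on $(n_i)$ (for $n_i\equiv 2$ it is $z\mapsto z^2$ on the circle).

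So the hypothesis must be read as $\lim_i\alpha_i=0$ (equivalently $1-\alpha_i\to 1$): given the formula for $f_{n,\alpha}$, which confines the $n$-fold wrap to the arc of length $\alpha$, the printed ``$\lim\alpha_i=1$'' is evidently a typo. Your proof silently establishes the corrected statement while citing the literal hypothesis, and that inference, as written, is a non sequitur; you should either flag the inconsistency explicitly or restate the hypothesis as $\alpha_i\to 0$ and then choose $N$ with $\alpha_i<1-\beta$ for $i>N$. With that repair the rest of your sketch is sound: stabilization gives pointwise convergence at every $z\neq 1$; the point $z=1$ is fixed by every $g_{n,\alpha}$ since $f_{n,\alpha}(0)=0$ and $f_{n,\alpha}(1)=n\in\mathbb{Z}$; $g_{n_i,\alpha_i}\circ p\to p$ is pointwise convergence evaluated at $p(x)$; and $q\circ g_{n_i,\alpha_i}\to q$ follows because $g_{n_i,\alpha_i}(z)$ is eventually equal to $z$ at each point (so even continuity of $q$ is not really needed there).
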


It is essential, that the statement of Proposition~\ref{prop:ex} does not depend on $(n_i)$. For instance we can choose $(n_i)$ so that each integer $n$ is presented there infinitely many times.
It is not hard to extend this example to the maps of $n$-dimensional sphere $S^n$, for $n\geq2$.

The next example shows that the classes $\hcB{X,Y}$ and $\cB{X,Y}$ are different.
\begin{proposition}\label{ex:b1_is_not_hb1}
  Let $\mathbb Q$ be the set of all rational numbers in $\mathbb R$. There exists a Baire-one function $f:\mathbb Q\to \mathbb Q$ which does not belong to $\hcB{\mathbb Q,\mathbb Q}$.
\end{proposition}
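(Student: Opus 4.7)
The plan is to exploit the total disconnectedness of $\mathbb{Q}$. First I would observe that any continuous map $\gamma:[0,1]\to\mathbb{Q}$ is constant, since the image of the connected set $[0,1]$ lies in a single connected component of $\mathbb{Q}$ and every such component is a singleton. Applying this to each slice $t\mapsto H(q,t)$ of a homotopy $H:\mathbb{Q}\times[0,+\infty)\to\mathbb{Q}$ with $q\in\mathbb{Q}$ fixed, I conclude that $H(q,t)=H(q,0)$ for all $t\geq 0$. Therefore, if $f\in\hcB{\mathbb{Q},\mathbb{Q}}$ is witnessed by $H$, the convergence $\lim_{n\to\infty} H(q,n)=f(q)$ forces $f(q)=H(q,0)$, so $f$ coincides with the continuous map $H(\cdot,0)$. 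Thus every element of $\hcB{\mathbb{Q},\mathbb{Q}}$ is continuous.

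It then suffices to exhibit a Baire-one function $f:\mathbb{Q}\to\mathbb{Q}$ that is not continuous. A convenient choice is $f(0)=0$ and $f(q)=1$ for $q\neq 0$, which is discontinuous at $0$. To approximate $f$, I would pick a sequence of irrational numbers $r_n\searrow 0$ and set $U_n=\{q\in\mathbb{Q}: |q|<r_n\}$. Since $r_n$ is irrational, the complement of $U_n$ in $\mathbb{Q}$ equals $\{q\in\mathbb{Q}: |q|>r_n\}$, which is also open, so $U_n$ is clopen in $\mathbb{Q}$. Then $f_n:\mathbb{Q}\to\mathbb{Q}$, equal to $0$ on $U_n$ and $1$ on its complement, is continuous (locally constant), and $f_n\to f$ pointwise: at $q=0$ we have $f_n(0)=0$ for every $n$, while for $q\neq 0$ we have $|q|>r_n$ eventually and so $f_n(q)=1$. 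Hence $f\in\cB{\mathbb{Q},\mathbb{Q}}\setminus\hcB{\mathbb{Q},\mathbb{Q}}$.

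There is no real obstacle here; the whole argument rests on the elementary observation that $\mathbb{Q}$ is totally disconnected, which forces every homotopy class of continuous self-maps of $\mathbb{Q}$ to contain a single map and hence collapses $\hcB{\mathbb{Q},\mathbb{Q}}$ to the continuous maps. The only care required is that the approximating step functions take values in $\mathbb{Q}$, which is why the cutoff radii $r_n$ are chosen irrational so that the separating sets $U_n$ are genuinely clopen in $\mathbb{Q}$.
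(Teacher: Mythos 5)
Your argument is correct, and its core coincides with the paper's: since $[0,+\infty)$ is connected and $\mathbb Q$ is totally disconnected, every slice $t\mapsto H(q,t)$ of a would-be $\Bo$-homotopy is constant, so the limit condition forces $f=H(\cdot,0)$, i.e.\ every member of $\hcB{\mathbb Q,\mathbb Q}$ is continuous; this is exactly the paper's obstruction. Where you diverge is in the choice of the Baire-one witness. The paper enumerates $\mathbb Q$ by a bijection $\varphi:\mathbb N\to\mathbb Q$, sets $f(\varphi(n))=\tfrac1n$, and invokes Theorem~24.10 of Kechris to conclude $f\in\cB{\mathbb Q,\mathbb Q}$; this produces an \emph{everywhere} discontinuous function but leans on the classical characterization theorem. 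You instead take the two-valued function discontinuous only at $0$ and exhibit the approximating sequence explicitly, using irrational cutoffs $r_n\searrow 0$ so that the sets $\{q\in\mathbb Q:|q|<r_n\}$ are clopen in $\mathbb Q$ and the step maps are locally constant, hence continuous, with values in $\mathbb Q$. This makes your proof entirely self-contained and more elementary (a single point of discontinuity suffices, since the obstruction shows $\hcB{\mathbb Q,\mathbb Q}$ consists only of continuous maps), while the paper's version is shorter by citation and incidentally shows that even an everywhere discontinuous Baire-one self-map of $\mathbb Q$ exists. Both are complete proofs of the proposition.
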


\begin{proof}
  Consider any bijection $\varphi:\mathbb N\to\mathbb Q$ and put
    \begin{gather*}
      f(x)=\tfrac{1}{n}, \mbox{\,\,\,if\,\,\,} x=\varphi(n).
    \end{gather*}
    Then Theorem 24.10 from~\cite{Kechris} implies that $f\in\cB{\mathbb Q,\mathbb Q}$.

    Assume that there exists a continuous map $H:\mathbb Q\times[0,+\infty)\to \mathbb Q$ such that~(\ref{gath:2}) holds. Since for any $x\in\mathbb Q$ the set $H(\{x\}\times [0,+\infty))$ is connected, $H|_{\{x\}\times [0,+\infty)}$ is a constant map. Then condition~(\ref{gath:2}) implies that $H|_{\{x\}\times [0,+\infty)}=f(x)$ for every $x\in\mathbb Q$. It follows that $H|_{\mathbb Q\times \{t\}}=f$ for any $t\in [0,+\infty)$. Since $f$ is everywhere discontinuous on $\mathbb Q$, we obtain  a contradiction. Hence, $f\not\in\hcB{\mathbb Q,\mathbb Q}$.
\end{proof}

\section{The simplest cases when the equality $\hcB{X,Y}=\cB{X,Y}$ holds}
We say that a topological space $Y$ is  {\it an adhesive for $X$} \cite{Karlova:Mykhaylyuk:Stable} (and denote this fact by $Y\in {\rm Ad}(X)$), whenever for any two disjoint functionally closed  sets   $A$ and $B$ in $X$ and continuous maps $f,g:X\to Y$ there exists a continuous map $h:X\to Y$ such that $h|_A=f|_A$ and $h|_B=g|_B$.

It is easy to see that $Y$ is contractible if and only if $Y\in {\rm Ad}(Y\times [0,1])$.
Moreover, the following simple fact holds true.
\begin{lemma}\label{lem:ad_contr}
Let $X$ be a topological space and $Y$ be a contractible space. Then $Y\in{\rm Ad}(X)$.
\end{lemma}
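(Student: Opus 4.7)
The plan is to use the separation of disjoint functionally closed sets by a continuous real-valued function, together with a contraction of $Y$ to a point, and glue $f$ near $A$ and $g$ near $B$ along the level set where the separating function equals $1/2$.

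First I would invoke the standard fact that two disjoint functionally closed subsets of any topological space can be completely separated: if $A = u^{-1}(0)$ and $B = v^{-1}(0)$ for continuous $u,v:X\to[0,1]$ with $A\cap B=\emptyset$, then $u+v>0$ on $X$ and $\varphi(x) = u(x)/(u(x)+v(x))$ is a continuous function $X\to[0,1]$ with $\varphi^{-1}(0)=A$ and $\varphi^{-1}(1)=B$. Next, since $Y$ is contractible, there exist a point $y_0\in Y$ and a continuous contraction $c:Y\times[0,1]\to Y$ with $c(y,0)=y$ and $c(y,1)=y_0$ for all $y\in Y$.

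The idea is then to define $h:X\to Y$ by the piecewise formula
\[
h(x) = \begin{cases} c\bigl(f(x),\,2\varphi(x)\bigr), & \varphi(x) \le \tfrac{1}{2}, \\[2pt] c\bigl(g(x),\,2-2\varphi(x)\bigr), & \varphi(x) \ge \tfrac{1}{2}. \end{cases}
\]
Each branch is continuous as a composition of continuous maps on the closed sets $X_1=\varphi^{-1}([0,\tfrac12])$ and $X_2=\varphi^{-1}([\tfrac12,1])$, and on the overlap $\varphi^{-1}(\tfrac12)$ both branches equal $y_0$, so the pasting lemma delivers continuity of $h$ on $X=X_1\cup X_2$. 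On $A$ we have $\varphi=0$, hence $h(x)=c(f(x),0)=f(x)$; on $B$ we have $\varphi=1$, hence $h(x)=c(g(x),0)=g(x)$.

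There is no real obstacle: the only subtlety is choosing the piecewise formula so that the two branches agree on the level set $\varphi^{-1}(\tfrac12)$, which is ensured by letting both branches reach the contraction point $y_0$ exactly there. The argument does not use any hypothesis on $X$, consistent with the stated lemma.
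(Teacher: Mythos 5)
Your proof is correct and is essentially the paper's argument: both use a separating function $\varphi$ for the functionally closed sets $A$ and $B$ together with the contraction to interpolate between $f$ and $g$, with the two branches meeting at the contraction point $y_0$ on the middle level set (you rescale $\varphi$ to $[0,1]$ and split at $1/2$, while the paper uses $[0,2]$ and splits at $1$). Your version is if anything slightly more careful, since you spell out the construction of $\varphi$ from the functional closedness of $A$ and $B$ and apply the pasting lemma on two closed pieces.
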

\begin{proof}
Let $\lambda:Y\times [0,1]\to Y$ be a contraction into some point $y_0\in Y$, that is $\lambda(y,0)=y$ and $\lambda(y,1)=y_0$ for all $y\in Y$.
Let also $A,B\subseteq X$ be two disjoint functionally closed sets.
Then one can find a continuous function $\varphi:X\to [0,2]$ such that $A=\varphi^{-1}(0)$ and $B=\varphi^{-1}(2)$.
Now for any two continuous maps $f,g:X\to Y$ the map $h:X \to Y$ defined by
\[
h(x)=
\begin{cases}
    \lambda(f(x),\varphi(x)), & x\in\varphi^{-1}([0,1]), \\
    \lambda(g(x),2-\varphi(x)), & x\in\varphi^{-1}((1,2])
\end{cases}
\]
has the property that $h|_A=f|_A$ and $h|_B=g|_B$.
\end{proof}

\begin{proposition}\label{prop:contra}
Let $X$ and $Y$ be topological spaces and $f\in \cB{X,Y}$.
Then in each of the following cases we have that $f\in \hcB{X,Y}$.
  \begin{enumerate}[label={\rm(\arabic*)}]
    \item\label{it1:prop:equalityOfBhB} All continuous maps $f:X \to Y$ null-homotopic, i.e. homotopic to a constant map into the same point $y\in Y$.
For instance this hold when either $X$ or $Y$ is contractible, or when $X=S^n$ is an $n$-dimensional sphere, $n\geq1$, and $Y$ is path connected and its $n$-th homotopy group vanish: $\pi_n Y = 0$.

    \item\label{it2:prop:equalityOfBhB} The image $f(X)$ is contained in a contractible subspace $T \subset Y$.

    \item\label{it3:prop:equalityOfBhB} $Y\in {\rm Ad}(X\times [0,1])$.

    \item\label{it4:prop:equalityOfBhB} The image $f(X)$ is finite and $Y$ is a path-connected Hausdorff space.
  \end{enumerate}
\end{proposition}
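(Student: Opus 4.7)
The plan is to treat the four cases separately, with case~(3) carrying the main technical weight and the other cases reducing to it or following from essentially trivial observations. In every case I start from a Baire-one witness $(f_n)\to f$ (pointwise) and aim either to verify that $(f_n)$ is itself a homotopic Baire-one witness or to construct a new pairwise-homotopic approximation, equivalently a $\Bo$-homotopy $H\colon X\times[0,+\infty)\to Y$.

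Case~(1) is immediate: under the hypothesis every continuous $X\to Y$ is homotopic to a common constant $y\in Y$, so any two continuous maps are homotopic by transitivity, and the original sequence $(f_n)$ already qualifies. The stated sub-examples follow from standard facts — a contraction of $X$ or of $Y$ makes every continuous map null-homotopic, and for $X=S^n$ with $Y$ path-connected and $\pi_n(Y)=0$ the free homotopy set $[S^n,Y]$ is the $\pi_1(Y)$-orbit quotient of $\pi_n(Y)$ and hence trivial.

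Case~(3) is the heart of the argument. Given $(f_n)\to f$ pointwise, I will construct $H$ strip by strip on $X\times[n,n+1]$. The slices $A_n=X\times\{n\}$ and $B_n=X\times\{n+1\}$ are disjoint and functionally closed, as zero sets of the continuous functions $|t-n|$ and $|t-(n+1)|$; feeding the constant-in-$t$ extensions $F(x,t)=f_n(x)$ and $G(x,t)=f_{n+1}(x)$ into the adhesive property $Y\in\mathrm{Ad}(X\times[0,1])$ (via an affine identification of the strip with $X\times[0,1]$) produces a continuous $h_n\colon X\times[n,n+1]\to Y$ with $h_n|_{A_n}=f_n$ and $h_n|_{B_n}=f_{n+1}$. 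Consecutive $h_n$'s agree on the shared slice, so the pasting lemma assembles them into a continuous $H$ with $H(x,n)=f_n(x)\to f(x)$.

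Cases~(2) and~(4) are reductions. For case~(2), Lemma~\ref{lem:ad_contr} supplies $T\in\mathrm{Ad}(X\times[0,1])$, so the case~(3) construction can be run inside $T$ (and a homotopy in $T$ is automatically a homotopy in $Y$); what remains is to show that $f$ admits a Baire-one approximation by continuous maps $X\to T$, since the original $f_n$ may range outside of $T$. This refinement step is the one I expect to be the main obstacle, and it is where a direct hand-crafted construction using the contraction $\lambda\colon T\times[0,1]\to T$ may be needed as a substitute. Case~(4) should then reduce to case~(2) by enveloping the finite image $\{y_1,\dots,y_k\}\subseteq Y$ into a contractible subspace of $Y$: path-connectedness lets one embed the $y_i$ into the image of a continuous map from an abstract finite tree $T_0\to Y$, and, after an appropriate factorization of $f$ through $T_0$, the argument is pushed into the contractible $T_0$ and then into case~(2).
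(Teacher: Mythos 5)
Your cases \ref{it1:prop:equalityOfBhB} and \ref{it3:prop:equalityOfBhB} are correct and essentially identical to the paper's own argument: for \ref{it3:prop:equalityOfBhB} the paper also feeds the constant-in-$t$ extensions of $f_n$ and $f_{n+1}$ into the adhesive property, using the slices $X\times\{n\}$ and $X\times\{n+1\}$ as the disjoint functionally closed sets, and pastes the resulting maps along the common slices; your strip-by-strip version with the affine identification of $X\times[n,n+1]$ with $X\times[0,1]$ is, if anything, slightly more careful than the paper's wording.

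The genuine gap is case \ref{it2:prop:equalityOfBhB}, and you acknowledge it yourself: the hypothesis only gives witnesses $f_n:X\to Y$ converging pointwise to $f$, and these need not take values in $T$; the step you defer --- producing continuous maps $X\to T$ converging pointwise to $f$ --- is precisely the content of the case, and ``a direct hand-crafted construction using the contraction $\lambda$ may be needed'' is a hope, not an argument. Note also that once one does have approximating maps with values in $T$, no appeal to ${\rm Ad}(X\times[0,1])$ or to case \ref{it3:prop:equalityOfBhB} is needed: all continuous maps into the contractible space $T$ are pairwise homotopic inside $T$, hence inside $Y$, so this is just case \ref{it1:prop:equalityOfBhB} with target $T$. (The paper itself dismisses \ref{it2:prop:equalityOfBhB} as obvious without addressing the values-outside-$T$ issue, so you have at least located the real crux; but as a proof attempt your \ref{it2:prop:equalityOfBhB} is unfinished.) This propagates to case \ref{it4:prop:equalityOfBhB}, which you reduce to \ref{it2:prop:equalityOfBhB}; moreover your reduction is vaguer than the paper's: the paper constructs, by induction on the number of image points and using that points of a path-connected Hausdorff space can be joined by \emph{embedded} arcs, an embedded finite contractible $1$-dimensional complex (a topological tree) $T\subset Y$ containing $f(X)$, and then invokes \ref{it2:prop:equalityOfBhB}. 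Your ``continuous map from an abstract finite tree $T_0\to Y$'' together with ``an appropriate factorization of $f$ through $T_0$'' does not deliver a contractible subspace of $Y$ containing $f(X)$ unless the map is an embedding, and the proposed factorization of $f$ (a map into $Y$, not into $T_0$) has no clear meaning as stated.
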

\begin{proof} It is easy to see that the sufficiency of conditions \ref{it1:prop:equalityOfBhB} and \ref{it2:prop:equalityOfBhB} is obvious.

\ref{it3:prop:equalityOfBhB}.
Suppose $Y\in {\rm Ad}(X\times [0,1])$ and let $H:X\times \omega\to Y$ be a map satisfying~\eqref{gath:1}.
For $n\in\omega$ define a continuous map $f_n:X\times [0,+\infty)\to Y$ by $f_n(x,t)=H(x,n)$ for $(x,t)\in X\times [0,+\infty)$.
As $Y$ is an adhesive for $X\times [0,1]$, there exists a sequence $(g_n)_{n\in\omega}$ of continuous maps $g_n:X\times [0,+\infty)\to Y$ such that $g_n|_{X\times \{n\}}=f_n|_{X\times \{n\}}$ and $g_n|_{X\times \{n+1\}}=f_{n+1}|_{X\times \{n+1\}}$ for every $n\in\omega$. It remains to put $\widetilde H(x,t)=g_n(x,t)$ if $x\in X$ and $t\in [n,n+1)$ for some $n\in\omega$.

\ref{it4:prop:equalityOfBhB}.
Assume that $Y$ is path-connected and Hausdorff and $f(X)=\{y_0,y_1,\dots,y_n\}$ is a finite set.
Then one can construct an embedded into $Y$ finite contractible $1$-dimensional CW-complex $T$ (``a topological tree'') containing $f(X)$, whence our statement will follow from~\ref{it2:prop:equalityOfBhB}.

For the proof we will use an induction on $n$.
If $n=0$, then we can set $T = f(X) = \{y_0\}$.
Suppose we have already find an embedding of a finite contractible $1$-dimensional CW-complex $T \subset Y$ such that $\{y_0,\ldots,y_{n-1}\} \subset T$.
As $Y$ is path connected and Hausdorff, we can connect $y_n$ with some point in $T$ via an embedded path $\phi:[0,1] \to Y$ such that $\phi(0)\in Y$, $\phi(1)\in T$, and $\phi(0,1) \subset Y\setminus (T \cup \{y_n\})$.
Then $T' = T\cup \phi[0,1]$ is the required finite contractible $1$-dimensional CW-complex containing $f(X)$.
%
%
%
%
\end{proof}

For example, $\hcB{S^n,S^1}=\cB{S^n,S^1}$ for $n\geq2$.

\section{Lifting theorem for $\sigma$-discrete maps}
Let $\mathcal M_0(X)$ be the family of all functionally closed subsets of  $X$ and let $\mathcal A_0(X)$ be the family of all functionally open subsets of  $X$. For every  $\alpha\in [1,\omega_1)$ we put
\begin{gather*}
   \mathcal M_{\alpha}(X)=\Bigl\{\bigcap\limits_{n=1}^\infty A_n: A_n\in\bigcup\limits_{\beta<\alpha}\mathcal A_{\beta}(X),\,\, n=1,2,\dots\Bigr\}\,\,\,\mbox{and}\\
   \mathcal A_{\alpha}(X)=\Bigl\{\bigcup\limits_{n=1}^\infty A_n: A_n\in\bigcup\limits_{\beta<\alpha}\mathcal M_{\beta}(X),\,\, n=1,2,\dots\Bigr\}.
 \end{gather*}
Elements from the class  $\mathcal M_\alpha(X)$ belong to {\it the $\alpha$'th functionally multiplicative class} and elements from  $\mathcal A_\alpha(X)$ belong to {\it the $\alpha$'th functionally additive class} in  $X$. We say that a set is {\it functionally ambiguous of the $\alpha$'th class} if it belongs to $\mathcal M_\alpha(X)\cap\mathcal A_\alpha(X)$.

A family $\mathscr A=(A_i:i\in I)$ of subsets of a topological space $X$ is called
   {\it discrete}, if every point of $X$ has an open neighborhood which intersects at most one set from $\mathscr A$;
       {\it strongly functionally discrete} or, briefly, {\it an sfd family}, if there exists a discrete family $(U_i:i\in I)$ of functionally open subsets of $X$ such that $\overline{A_i}\subseteq U_i$ for every $i\in I$. Notice that in a metrizable space $X$ each discrete family of sets is strongly functionally discrete.

A family $\mathscr B$ of sets of a topological space $X$ is called {\it a base}  for a map $f:X\to Y$ if the preimage $f^{-1}(V)$ of any open set  $V$ in $Y$ is a union of sets from $\mathscr B$. In the case when $\mathscr B$ is a countable union of sfd families, we say that $f$ is $\sigma$-sfd discrete and write this fact as  $f\in \Sigma^s(X,Y)$. If a map $f$ has a $\sigma$-sfd base which consists of functionally ambiguous sets of the $\alpha$'th class, then we say that $f$ belongs to   the class  $\Sigma_\alpha^s(X,Y)$.

A continuous map $\varphi:Y\to X$ between topological spaces $Y$ and $X$  is {\it a weak local homeomorphism}~\cite{Karlova:Mykh:Umzh:2008} if for any point $x\in X$ there are an open neighborhood $V_x$ of $x$ and an open set $U_x\subseteq Y$ such that the restriction $\varphi|_{U_x}:U_x\to V_x$ is a homeomorphism. Moreover, we say that the space $X$ is   {\it weakly covered by $Y$}.

Let $\varphi:X\to Y$ be a weak local homeomorphism. We assign with the map $\varphi$ two families $\mathscr U$ and $\mathscr V$ of open sets in $X$ and $Y$ respectively with the following properties:
\begin{enumerate}
  \item $\mathscr V$ is covering of $Y$;

  \item for every $V\in\mathscr V$ there exits $U_V\in\mathscr U$ such that $\varphi|_{U_V}:U_V\to V$ is a homeomorphism.
\end{enumerate}
The families  $\mathscr U$ and $\mathscr V$ are said to be {\it associated with $\varphi$}.

Assume that $X$, $Y$ and $Z$ are topological spaces and $\varphi:Z\to Y$ is a weak local homeomorphism. If $\mathscr P$ is a property of maps, then we say that the triple $(X,Y,Z)$ has {\it $\mathscr P$-Lifting Property} whenever for all $f\in\mathscr P(X,Y)$ there exists $g\in\mathscr P(X,Z)$ such that $f=\varphi\circ g$.

\begin{theorem}\label{thm:lift}
  Let $X, Y, Z$ be topological spaces and $Y$ is weakly covered by   $Z$.  If $Y$ is paracompact, then $(X,Y,Z)$ has $\Sigma_\alpha^s$-Lifting Property for all $\alpha\in[1,\omega_1)$.
\end{theorem}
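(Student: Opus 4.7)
The plan is to use the paracompactness of $Y$ to refine the associated open cover $\mathscr V$ to a $\sigma$-discrete functionally open cover over which $\varphi$ admits local inverses, then to define $g$ by selecting at each $x\in X$ the unique such inverse determined by an ordering on the refinement, and finally to assemble a $\sigma$-sfd base for $g$ from the given one for $f$ together with this refinement. For the refinement I would apply Michael's characterisation of paracompactness to obtain a $\sigma$-discrete closed refinement of $\mathscr V$, and then use collectionwise normality and the Tychonoff property of paracompact Hausdorff spaces to expand each discrete closed layer $\{F_{m,i}\}_{i\in I_m}$ to a discrete functionally open family $\{W_{m,i}\}_{i\in I_m}$ with each $W_{m,i}$ still contained in some $V\in\mathscr V$. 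This produces for every $(m,i)$ a local inverse $\psi_{m,i}\colon W_{m,i}\to Z$ of $\varphi$.

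With the refinement in hand, set $Y_m=\bigcup_i W_{m,i}$, $X_m=f^{-1}(Y_m)$, and $X_m^{*}=X_m\setminus\bigcup_{m'<m}X_{m'}$, so that $X=\bigsqcup_{m\in\omega}X_m^{*}$. Discreteness of $\{W_{m,i}\}_i$ means that each $x\in X_m^{*}$ lies in $f^{-1}(W_{m,i})$ for a unique index $i(x)$, and I define
\[
g(x)=\psi_{m,i(x)}\bigl(f(x)\bigr),
\]
which automatically satisfies $\varphi\circ g=f$. Writing the given $\sigma$-sfd base of $f$ as $\mathscr B=\bigcup_n\mathscr B_n$, I propose as a base for $g$
\[
\mathscr C=\bigcup_{n,m}\bigl\{X_m^{*}\cap B:B\in\mathscr B_n,\ B\subseteq f^{-1}(W_{m,i})\text{ for some }i\in I_m\bigr\}.
\]
Unpacking $g^{-1}(W)=\bigcup_{m,i}X_m^{*}\cap f^{-1}(\psi_{m,i}^{-1}(W))$ and expanding the inner preimage through $\mathscr B$ shows that $\mathscr C$ is a base for $g$, and the sfd property of each $\mathscr B_n$ descends to every slice because subfamilies and intersections with a fixed set preserve sfd-ness.

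The main obstacle I anticipate is verifying that every element of $\mathscr C$ is functionally ambiguous of class $\alpha$. Since each $B$ already is, the issue reduces to showing that each level set $X_m$, and hence $X_m^{*}$, belongs to $\mathcal M_\alpha(X)\cap\mathcal A_\alpha(X)$. For this I would invoke the standard fact that the union of an sfd family of class $\alpha$ ambiguous sets remains class $\alpha$ ambiguous, applied layer-by-layer to the decomposition $X_m=\bigcup_n\bigcup_i\bigcup\{B\in\mathscr B_n:B\subseteq f^{-1}(W_{m,i})\}$, exploiting the discreteness of $\{W_{m,i}\}_i$ together with the cozero witnesses of $\mathscr B_n$ to keep the relevant families sfd. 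Closure of $\mathcal M_\alpha\cap\mathcal A_\alpha$ under finite Boolean operations then lets one pass from $X_m$ to $X_m^{*}$ and to $X_m^{*}\cap B$; this bookkeeping of witnessing families is the delicate piece, whereas the construction of $g$ itself is essentially a selection argument on the $\sigma$-discrete cover.
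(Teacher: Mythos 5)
Your overall architecture (take a $\sigma$-discrete refinement of the cover associated with $\varphi$, define $g$ by composing $f$ with layerwise inverses of $\varphi$, and build a base for $g$ by intersecting the base of $f$ with the pieces of a countable decomposition of $X$) matches the paper's, and the differences in the refinement step are harmless but unnecessarily heavy: the paper simply takes a $\sigma$-discrete open refinement of $\mathscr V$ (available by paracompactness) rather than a closed refinement expanded via collectionwise normality to functionally open discrete families. Functional openness of the $W_{m,i}$ in $Y$ buys you nothing, since the functional classes you must control live in $X$ and are supplied by the sfd base of $f$, not by $f$-preimages of nice sets in $Y$ ($f$ is not continuous).

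The genuine gap is the disjointification. You partition $X$ by ``first layer hit'', $X_m^{*}=X_m\setminus\bigcup_{m'<m}X_{m'}$ with $X_m=f^{-1}(Y_m)$, and claim that $X_m$ (hence $X_m^{*}$) is functionally ambiguous of class $\alpha$ by an sfd-union argument. That argument only works within a single layer of the base: in your decomposition $X_m=\bigcup_n\bigcup_i\bigcup\{B\in\mathscr B_n: B\subseteq f^{-1}(W_{m,i})\}$ the outer union over $n\in\omega$ is a countable union of ambiguous class-$\alpha$ sets, which yields only $X_m\in\mathcal A_\alpha(X)$, not $X_m\in\mathcal M_\alpha(X)$. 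This failure is real, not merely a gap in bookkeeping: already for $\alpha=1$ and $X=\mathbb R$, a Baire-one (hence $\Sigma_1^s$) function such as $f=\sum_n 2^{-n}\chi_{\{q_n\}}$ has $f^{-1}((0,+\infty))=\mathbb Q$, an $F_\sigma$ set that is not $G_\delta$; so $X_m$ need not be ambiguous, the differences $X_m^{*}$ need not be, the elements $X_m^{*}\cap B$ of your proposed base need not be functionally ambiguous of class $\alpha$, and the verification that $g\in\Sigma_\alpha^s(X,Z)$ collapses. The missing idea is the functional Reduction Theorem (\cite[Lemma 3.2]{Karlova:CMUC:2013}), which is exactly how the paper proceeds: from the countable cover $B_n=f^{-1}(W_n)\in\mathcal A_\alpha(X)$ one obtains a partition $(A_n)$ of $X$ by functionally \emph{ambiguous} class-$\alpha$ sets with $A_n\subseteq B_n$ (ambiguity comes for free because the reduced sets partition all of $X$, so the complement of each is again a countable union of additive class-$\alpha$ sets), and one then defines $g$ on $A_n$ via the continuous inverse over the $n$-th layer and takes $\{B\cap A_n\}$ as the $\sigma$-sfd base. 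With that substitution the rest of your construction goes through essentially as you wrote it.
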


\begin{proof}
Let $\varphi:Z\to Y$ be a weak local homeomorphism and $\mathscr U$, $\mathscr V$ be associated families with $\varphi$ in $Z$ and $Y$, respectively.
Since $Y$ is paracompact, there exists a $\sigma$-discrete open refinement $\mathscr W$ of $\mathscr V$. For each $W\in\mathscr W$ we choose $V\in\mathscr V$ such that $W\subseteq V$. The continuity of $\varphi$ implies that the set $U_W=\varphi^{-1}(W)$ is open in $Z$ for all $W\in\mathscr W$. Moreover, $\varphi|_{U_W}:U_W\to W$ is a homeomorphism.

We consider a map $f\in\Sigma_\alpha^s(X,Y)$ and a  base $\mathscr B=\bigcup_{n\in\omega}\mathscr B_n$ for $f$ such that every $\mathscr B_n$ is strongly functionally discrete family consisting of functionally ambiguous sets of the $\alpha$'th class in $X$. Let $(\mathscr W_n:n\in\omega)$ be a sequence of discrete families of open sets in $Y$ with $\mathscr W=\bigcup_{n\in\omega}\mathscr W_n$. For every $n\in\omega$ we put $W_n=\bigcup\mathscr W_n$ and choose a subfamily $\mathscr B_{W_n}$ of $\mathscr B$ such that $B_n=f^{-1}(W_n)=\bigcup\mathscr B_{W_n}$. Notice that $B_n\in\mathcal A_\alpha(X)$ for all $n\in\omega$ and $(B_n:n\in\omega)$ is a covering of $X$. Using functional version of Reduction Theorem~\cite[Lemma 3.2]{Karlova:CMUC:2013} we choose a partition $(A_n:n\in\omega)$ of $X$ by  functionally ambiguous sets of the $\alpha$'th class with $A_n\subseteq B_n$ for all $n\in\omega$. Notice that the map $\varphi^{-1}|_{W_n}:W_n\to Z$ is continuous, since the family $\mathscr W
 _n$ is discrete. We put $C_n=f(A_n)$ and $D_n=\varphi^{-1}(C_n)$. Then the map $\psi_n=\varphi^{-1}|_{C_n}:C_n\to D_n$ is a homeomorphism for every $n\in\omega$.

Now for all $x\in X$ we define
\begin{gather*}
  g(x)=\psi_n(f(x))
\end{gather*}
if $x\in A_n$ for some $n\in\omega$.

We show that $g\in\Sigma_\alpha^s(X,Z)$. Let $\widetilde{\mathscr B}_n=(B\cap A_n:B\in\mathscr B_n)$ for all $n\in\omega$. It is easy to see that the family $\widetilde{\mathscr B}_n$ is strongly functionally discrete and consists of functionally ambiguous sets of the $\alpha$'th class. Moreover, the family $\widetilde{\mathscr B}=\bigcup_{n\in\omega}\widetilde{\mathscr B}_n$ is a $\sigma$-sfd base for $f$. Indeed, fix an open set $G\subseteq Z$. Then
\begin{gather*}
  g^{-1}(G)=\bigcup_{n\in\omega}(g^{-1}(G)\cap A_n)=\bigcup_{n\in\omega}(f^{-1}(\varphi(G\cap U_n))\cap A_n),
\end{gather*}
where $U_n=\cup\{U_{W}:W\in\mathscr W_n\}$. Since  the set $\varphi(G\cap U_n)$ is open for every $n\in\omega$, there exists a subfamily $\mathscr A_n\subseteq\mathscr B$ such that $f^{-1}(\varphi(G\cap U_n))=\bigcup\mathscr A_n$. Then $\widetilde{\mathscr A}=\bigcup_{n\in\omega}(A\cap A_n: A\in\mathscr A_n)$ is a subfamily of $\widetilde{\mathscr B}$ such that $g^{-1}(G)=\bigcup\widetilde{\mathscr A}$.

Finally, we prove that $f=\varphi\circ g$. Fix $x\in X$ and find $n\in\omega$ such that $x\in A_n$. Then the equalities
$$
\varphi(g(x))=\varphi(\psi_n(f(x)))=\varphi(\varphi^{-1}(f(x)))=f(x)
$$
finish the proof.
\end{proof}

\begin{corollary}\label{cor:cover}
Let $X$ be a topological space and $Y$ be a paracompact space which is weakly covered by a metrizable contractible locally path-connected space $Z$. Then
\[ \cB{X,Y}=\hcB{X,Y}. \]
Moreover, every $f\in\cB{X,Y}$ is a pointwise limit of a sequence of null-homotopic continuous maps.
\end{corollary}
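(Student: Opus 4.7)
The plan is to combine Theorem~\ref{thm:lift} with case~(1) of Proposition~\ref{prop:contra}: lift a Baire-one $f:X\to Y$ to a map $g:X\to Z$, use the contractibility of $Z$ to approximate $g$ by pairwise null-homotopic continuous maps, and push the approximation down to $Y$ via $\varphi$.

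First, $Y$ is metrizable: it is paracompact and locally metrizable (each point of $Y$ has a neighbourhood homeomorphic to an open subset of the metrizable space $Z$), so Smirnov's theorem applies. Metrizability of $Y$ is what allows the Hansell-type identification $\cB{X,Y}\subseteq\Sigma_1^s(X,Y)$: a $\sigma$-discrete open base of $Y$ pulls back, along an approximating sequence of continuous maps for $f$, to a $\sigma$-sfd base of functionally ambiguous class~$1$ sets in $X$. Thus $f\in\Sigma_1^s(X,Y)$, and Theorem~\ref{thm:lift} with $\alpha=1$ produces $g\in\Sigma_1^s(X,Z)$ with $\varphi\circ g=f$.

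Since $Z$ is metrizable and locally path-connected, the reverse identification $\Sigma_1^s(X,Z)\subseteq\cB{X,Z}$, stitching continuous approximations out of the $\sigma$-sfd base using local path-connectedness, gives $g\in\cB{X,Z}$. Because $Z$ is contractible, every continuous map into $Z$ is null-homotopic to a common point, so case~(1) of Proposition~\ref{prop:contra} applies and yields $g\in\hcB{X,Z}$: there is a sequence of pairwise homotopic continuous maps $g_n:X\to Z$ with $g_n\to g$ pointwise, each null-homotopic in $Z$. Setting $f_n=\varphi\circ g_n:X\to Y$, continuity of $\varphi$ gives $f_n\to\varphi\circ g=f$ pointwise; post-composing a homotopy between $g_n$ and $g_m$ with $\varphi$ gives a homotopy between $f_n$ and $f_m$, and post-composing a null-homotopy of $g_n$ with $\varphi$ gives a null-homotopy of $f_n$. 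Hence $(f_n)$ is a sequence of pairwise homotopic, null-homotopic continuous maps converging pointwise to $f$, so $f\in\hcB{X,Y}$ and the moreover-clause is established.

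The main obstacle is the two descriptive identifications $\cB{X,Y}\subseteq\Sigma_1^s(X,Y)$ and $\Sigma_1^s(X,Z)\subseteq\cB{X,Z}$: they rely on metrizability of the codomain, and in the second case on local path-connectedness of $Z$ for the explicit construction of a continuous approximating sequence from a $\sigma$-sfd base. Once these are in hand, the lift-and-push-down argument via Theorem~\ref{thm:lift} and case~(1) of Proposition~\ref{prop:contra} is routine.
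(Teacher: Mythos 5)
Your proposal is correct and follows essentially the same route as the paper: identify $\cB{X,Y}$ with $\Sigma_1^s(X,Y)$, lift through the weak local homeomorphism via Theorem~\ref{thm:lift}, return to $\cB{X,Z}$ using local path-connectedness of $Z$, apply case~(1) of Proposition~\ref{prop:contra} thanks to contractibility of $Z$, and push the $\Bo$-homotopy down by composing with $\varphi$; the two ``descriptive identifications'' you flag as the main obstacle are exactly what the paper handles by citing Theorems 2.5 and 4.1 of Karlova (Eur.\ J.\ Math., 2016). Your preliminary Smirnov argument for metrizability of $Y$ is an extra step the paper does not spell out, and it is a harmless (indeed clarifying) addition.
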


\begin{proof}
We only need to prove the inclusion $\cB{X,Y}\subseteq h\cB{X,Y}$.
Let $f\in \cB{X,Y}$.
Then $f\in\Sigma_1^s(X,Y)$ by~\cite[Theorem 2.5]{Karlova:EJM:2015}.
Let $\varphi:Z\to Y$ be a weak local homeomorphism. According to Theorem~\ref{thm:lift} there exists a map $g\in\Sigma_1^s(X,Z)$ such that $f=\varphi\circ g$.
Since $Z$ is path-connected and locally path-connected, \cite[Theorem 4.1]{Karlova:EJM:2015} implies that $g\in {\rm B}_1(X,Z)$.
Furthermore, as $Z$ is contractible, we get from~\ref{it1:prop:equalityOfBhB} of Proposition~\ref{prop:contra} that $g\in {\rm hB}_1(X,Z)$.
Let $H_g:X\times [0,+\infty)\to Z$ be a $\Bo$-homotopy for $g$.
It is easy to see that $H_f=\varphi\circ H_g:X\times[0,+\infty)\to Y$ is a $\Bo$-homotopy for $f$.
Moreover, each map $(H_f)_t:X\times t \to Y$ is null-homotopic due to contractibility of $Z$.
\end{proof}

\section{Covering Theorem}

For a metric space $X$ by $B(a,r)$ and $B[a,r]$ we denote an open ball and a closed ball with  a center $a\in X$ and a radius $r\ge 0$.

\begin{theorem}\label{thm:cover}
 Any open path-connected subset of a convex  set in normed space is weakly covered by a contractible and locally contractible metrizable space.
\end{theorem}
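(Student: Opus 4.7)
The plan is to realize $Z$ as a combinatorial unfolding of $X$ along a tree of convex open sets, turning it into a tree of contractibles glued along convex overlaps. First I would extract the local structure: since $X$ is open in the convex set $C\subset E$, every $x\in X$ admits an open ball $B(x,r)\subset E$ with $B(x,r)\cap C\subset X$, and this intersection is convex. Hence $X$ has a base of convex open sets, is metrizable as a subspace of $E$, locally path-connected, and locally contractible. Using paracompactness of metrizable spaces, fix a locally finite open cover $\mathcal U=\{U_\alpha\}_{\alpha\in A}$ of $X$ by convex open sets, a basepoint $x_0\in X$, and an index $\alpha_0\in A$ with $x_0\in U_{\alpha_0}$. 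Path-connectedness plus a standard compactness argument along any path shows that the ``nerve graph'' on $A$ (edges between indices whose cover sets overlap) is connected, so every $U_\alpha$ is reachable from $U_{\alpha_0}$ by a finite chain of overlapping members of $\mathcal U$.

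I would then form the combinatorial tree $T$ whose nodes are finite sequences $\nu=(\alpha_0,\alpha_1,\ldots,\alpha_n)$ with $\alpha_{i+1}\neq\alpha_i$ and $U_{\alpha_i}\cap U_{\alpha_{i+1}}\neq\emptyset$, rooted at $(\alpha_0)$, edges sending $\nu$ to any one-step extension. Next, form the space $\widetilde Z=\bigsqcup_{\nu\in T} U_{\alpha_n(\nu)}$, attaching a disjoint copy of $U_{\alpha_n}$ at each node, and let $Z=\widetilde Z/\!\sim$, where for each edge $\nu\to\nu'=(\alpha_0,\ldots,\alpha_n,\alpha_{n+1})$ the common overlap $U_{\alpha_n}\cap U_{\alpha_{n+1}}$ is identified between the copies at $\nu$ and $\nu'$. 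Define $\varphi:Z\to X$ by sending the copy attached to $\nu$ tautologically onto $U_{\alpha_n}\subset X$; this is well-defined because the gluings respect $\varphi$.

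The verifications run as follows. For the weak local homeomorphism property, given $x\in X$ pick $\alpha\in A$ with $x\in U_\alpha$ and a tree node $\nu$ ending at $\alpha$; the copy of $U_\alpha$ attached to $\nu$ descends to an open subset of $Z$ mapped homeomorphically by $\varphi$ onto the neighborhood $U_\alpha$ of $x$, with openness verified by tracing the preimage under the quotient map and using that the gluings occur along open overlaps. For contractibility, note that $Z$ is a tree of convex sets glued along nonempty convex overlaps; an inductive (transfinite if necessary) leaf-collapse deformation-retracts each leaf copy $U_{\alpha_n}$ onto a point chosen in $U_{\alpha_n}\cap U_{\alpha_{n-1}}$ by convex interpolation, shrinking the tree until only the convex root $U_{\alpha_0}$ remains, which is then contracted to $x_0$. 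Local contractibility is immediate because locally $Z$ is either a single convex open set or two such glued along their convex overlap. Metrizability follows from local finiteness of $\mathcal U$ together with metrizability of each piece, via a Nagata--Smirnov argument or an explicit compatible metric built from the individual norms.

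The main obstacles will be (i) verifying that the copies attached to nodes are genuinely open in the quotient $Z$, which requires the gluing relation to be closed and the quotient map to be open and reduces to local finiteness of $\mathcal U$; and (ii) executing the leaf-collapse contraction coherently when $T$ is an infinite tree, which calls for careful transfinite bookkeeping but no new conceptual input.
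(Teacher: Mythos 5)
Your construction has a genuine gap at the crucial point: the quotient $Z$ you build is in general \emph{not Hausdorff}, hence not metrizable. You glue the copy of $U_{\alpha_n}$ at a node $\nu$ to the copy at an adjacent node along the overlap $U_{\alpha_n}\cap U_{\alpha_{n+1}}$, which is open but not closed in either piece; this is exactly the line-with-two-origins phenomenon. Concretely, since your tree allows chains such as $(\alpha_0,\alpha_1,\alpha_0)$, a point $x\in U_{\alpha_0}\cap\partial U_{\alpha_1}$ yields two distinct points of $Z$ (in the root copy and in the depth-two copy of $U_{\alpha_0}$) that admit no disjoint neighborhoods: every neighborhood of either contains points of $U_{\alpha_0}\cap U_{\alpha_1}$, and those are identified. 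The same happens already with chains of pairwise distinct indices whenever $U_{\alpha_0}\cap U_{\alpha_2}\neq\emptyset$ but meets the boundary of $U_{\alpha_1}$. So the Nagata--Smirnov step cannot be carried out (regularity fails), and local contractibility at such non-separated points is also unclear. A second, independent problem is the contraction: the tree of all admissible chains has no leaves (every node extends as long as its last set overlaps some other member of the cover), so the ``leaf-collapse'' induction never starts, and an infinite-stage push toward the root would need a genuine continuity argument that you have not supplied.

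The paper avoids all of this by refusing to glue copies along their overlaps in $G$ at all. It takes a disjoint union of the \emph{closed} convex pieces $\overline{V_\xi}$ and attaches each one by an arc to the center of a hedgehog, so all identifications are along single points; the resulting space carries an explicit metric and is obviously contractible and locally contractible. The map onto $G$ is then \emph{not} tautological outside the distinguished open pieces: on each $\overline{V_\xi}$ it is built from a Tietze/Dugundji-type extension of the identity of $U_\xi$ into $\overline{U_\xi}$, and the hedgehog spines are sent along paths to a basepoint $x_0$. The key observation you are missing is that a weak local homeomorphism requires only that each point of $G$ have \emph{some} neighborhood evenly covered by \emph{some} open set upstairs; no compatibility between different sheets is needed, so one can use a cheap wedge-like ``cover'' instead of a genuine unfolding, and the Hausdorff/metrizability issues never arise. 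If you want to salvage your unfolding idea, you would have to impose conditions making the gluing relation closed (or pass to a Hausdorff reflection and re-verify everything), which is considerably more delicate than the hedgehog construction.
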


\begin{proof}
  Let $E$ be a convex set in a normed  space $X$ and   $G$  be an open subset of $E$. Take an open set $H$ in $X$ such that $H\cap E=G$ and consider a covering $\mathscr H=(H_s:s\in X)$ of $H$ by open convex balls $H_s=B(a_s,r_s)$ in $X$. Put $S_0=\{s\in S:H_s\cap E\ne\emptyset)\}$, $\mathscr U=(H_s\cap G:s\in S_0)$ and $\mathscr V=(B(a_s,2r_s)\cap G:s\in S_0)$. We can assume that the coverings $\mathscr U$ and $\mathscr V$ of $G$ are well-ordered and let  $\mathscr U=(U_\xi:\xi\in[0,\alpha))$,  $\mathscr V=(V_\xi:\xi\in[0,\alpha))$.  Notice that $U_\xi$ and $V_\xi$ are convex subsets of $X$.

For every $\xi\in[0,\alpha)$ we fix $a_\xi\in U_\xi$. Consider a direct sum
  $$
  A=\bigoplus_{\xi\in[0,\alpha)} (\overline{V_\xi}\times\{\xi\}).
  $$
  Denote $G_\xi=U_\xi\times\{\xi\}$, $F_\xi=\overline{V}_\xi\times\{\xi\}$. Fix also any $b_\xi\in \overline{V_\xi}\setminus{V}_\xi$ for each $\xi$ and consider the following subspace of $A\times [0,1]$ with the Tychonoff topology:
  $$
  B=\bigcup_{\xi\in[0,\alpha)} (\{(b_\xi,\xi)\}\times[0,1])\cup A.
  $$
  By letting
  $$
  (a',t')\sim (a'',t'') \Leftrightarrow t'=t''=1 \quad\mbox{or}\quad a'=a''\,\,\mbox{and}\,\, t'=t''
  $$
  we define an equivalence relation $\sim$ on $B$. Then the subspace $\bigcup_{\xi\in[0,\alpha)} (\{(b_\xi,\xi)\}\times[0,1])/\sim$ is the hedgehog $J=J(|\alpha|)$.
     It is well-known~\cite[p.~251]{Eng-eng} that there exists a metric $d_J$  which generates the topology of  $J$. Clearly, the set $F_\xi$ is homeomorphic to $\overline{V_\xi}$ and let $d_X$ be a metric on $A$ induced from $X$.

     For all $b'=[(a',t')]$ and $b''=[(a'',t'')]$ from $B$ we put
  \begin{gather*}
    \varrho(b',b'')=\begin{cases}
                d_X(a',a''), & t'=t''=0\,\,\mbox{and}\,\, a',a''\in F_\xi,\\
                d_J(b',b'',), & b',b''\in J,\\
                d_X(a',(a_\xi,\xi))+d_J([(a_\xi,0)],[(a'',t'')]), & a'\in F_\xi, b''\in J,\\
                d_X(a',(a_\xi,\xi))+d_J([(a_\xi,0)],[(a_\eta,0)])+d_X((a_\eta,\eta), a''), & a'\in F_\xi, a''\in  F_\eta.
                  \end{cases}
  \end{gather*}
 It is easy to see that the  metric $\varrho$ generates the topology on $B$. We put
 $$
 Y=(B,\varrho)
 $$
 Then $Y$ is a contractible and locally contractible metric space.
 Let us observe that $(G_\xi:\xi\in[0,\alpha))$ is a discrete family of open sets in $Y$.

 Fix $\xi\in[0,\alpha)$ and $x_0\in G$. Since $\overline{U_\xi}$ is convex, we may use Tietze Extension Theorem and take a continuous map $f_\xi:Y\to \overline{U_\xi}$ such that $f_\xi(x,\xi)=x$ for all $x\in U_\xi$. Clearly, $f_\xi|_{G_\xi}$ is a homeomorphism.
 Since $G$ is path-connected, there exists a homeomorphic embedding \mbox{$\gamma_\xi:[0,1]\to G$} such that $\gamma_\xi(0)=a_\xi$ and $\gamma_\xi(1)=x_0$. Notice that the set $\overline{U_\xi}\cup\gamma_\xi([0,1])$ is contractible. Therefore, by Lemma~\ref{lem:ad_contr} there exists a continuous map $g_\xi:Y\to \overline{U_\xi}\cup\gamma_\xi([0,1])$ such that $g_\xi|_{\overline{G_\xi}}=f_\xi|_{\overline{G_\xi}}$ and $g_\xi|_{Y\setminus W_\xi}=x_0$.

Now for all $y\in Y$ we set
\begin{gather*}
  \varphi(y)=\left\{\begin{array}{ll}
                      g_\xi(y), & y\in W_\xi,\\
                      x_0, & y\not\in\bigcup_{\xi\in[0,\alpha)}W_\xi.
                    \end{array}
  \right.
\end{gather*}
It is easy to see that the map $\varphi:Y\to G$ is a weak local homeomorphism.
\end{proof}

\begin{theorem}\label{th:X_YANR}
Let $X$ be a topological space and $Y$ be a metrizable ANR. Then
\[ \cB{X,Y}=h\cB{X,Y}. \]
Moreover, every $f\in\cB{X,Y}$ is a pointwise limit of a sequence of null homotopic continuous maps.
\end{theorem}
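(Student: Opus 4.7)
The strategy is to realise the metrizable ANR $Y$ as a neighbourhood retract of an open subset of a Banach space, apply Corollary~\ref{cor:cover} inside that ambient open set, and push the resulting $\Bo$-homotopy back to $Y$ through the retraction.

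First, by a classical embedding theorem for metric ANRs (Wojdys{\l}awski / Arens--Eells), $Y$ embeds as a closed subset of a Banach space $E$ together with an open neighbourhood $U\subseteq E$ of $Y$ and a retraction $r:U\to Y$. Since an ANR is locally path-connected, the path components of $Y$ are clopen and each is itself a metrizable ANR; by treating these components separately it suffices to handle the case where $Y$ is path-connected. In that case let $U'\subseteq U$ be the path component of $U$ that contains $Y$; because the normed space $E$ is locally path-connected, $U'$ is open in $E$, hence an open path-connected subset of the convex set $E$, and $r|_{U'}:U'\to Y$ is still a retraction.

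Theorem~\ref{thm:cover} then produces a contractible, locally contractible metrizable space $Z$ and a weak local homeomorphism $\varphi:Z\to U'$. Since $U'$ is metrizable (hence paracompact), Corollary~\ref{cor:cover} applied to the pair $(X,U')$ gives $\cB{X,U'}=\hcB{X,U'}$, with the sharpening that every element of $\cB{X,U'}$ is a pointwise limit of null-homotopic continuous maps into $U'$. Given $f\in\cB{X,Y}$ with an approximating continuous sequence $f_n\to f$, the inclusion $Y\hookrightarrow U'$ realises $f$ as an element of $\cB{X,U'}$, so there is a $\Bo$-homotopy $H:X\times[0,+\infty)\to U'$ whose slices $g_n=H(\cdot,n)$ are null-homotopic in $U'$ and converge pointwise to $f$. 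Post-composing with the retraction gives $\widetilde H=r\circ H:X\times[0,+\infty)\to Y$; since $r|_Y=\mathrm{id}_Y$ we have $r\circ g_n\to f$ pointwise, and each $r\circ g_n$ is null-homotopic in $Y$ as the composition of a null-homotopic map with the continuous map $r$. Hence $f\in\hcB{X,Y}$.

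The main obstacle I foresee is the opening step: realising $Y$ as a neighbourhood retract inside a convex subset of a normed space, and then isolating a path-connected open piece $U'$ to which Theorem~\ref{thm:cover} applies. Once this ambient geometry is set up, the rest is essentially a diagram chase -- the weak local homeomorphism lifts Baire-one maps into a contractible space where null-homotopy is automatic, and the retraction pushes the resulting homotopy back to $Y$ without destroying null-homotopy. A small additional subtlety appears for a disconnected ANR $Y$, because any sequence of pairwise homotopic continuous maps must land in a single path-component; the null-homotopic conclusion in that case should be interpreted after restricting $f$ to the (clopen) preimages of the path-components of $Y$ cut out by a continuous approximant.
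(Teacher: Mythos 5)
Your argument is essentially the paper's: embed $Y$ as a closed subset of (a convex subset of) a normed space, take an open neighbourhood retracting onto $Y$ by the ANR property, apply Theorem~\ref{thm:cover} and Corollary~\ref{cor:cover} to that neighbourhood, and push the resulting $\Bo$-homotopy back into $Y$ by the retraction $r$; the final step ($r\circ H$ is a $\Bo$-homotopy with null-homotopic slices) is identical to the paper's. Your insertion of the path component $U'$ of the neighbourhood containing $Y$ is in fact a point of extra care: Theorem~\ref{thm:cover} is stated only for \emph{path-connected} open sets, and the paper applies it to $G$ without comment, so for path-connected $Y$ your write-up is correct and, if anything, slightly more complete than the paper's.

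The one genuine defect is the opening reduction ``treat the path components of $Y$ separately, so assume $Y$ is path-connected.'' This is not a valid reduction: for $f\in\cB{X,Y}$ the preimages of the components of $Y$ are in general only sets of the first functionally ambiguous class in $X$, not clopen, so $\Bo$-homotopies produced on them cannot be glued into continuous maps on $X$. Indeed the statement fails outright for disconnected $Y$: take $X=\mathbb Q$, $Y=\{0,1\}$ (a metrizable ANR) and $f$ the indicator of $\mathbb Q\cap(0,+\infty)$; then $f$ is the pointwise limit of the indicators of the clopen sets $\mathbb Q\cap(\alpha_n,+\infty)$ with irrational $\alpha_n\downarrow 0$, so $f\in\cB{\mathbb Q,\{0,1\}}$, whereas any sequence of pairwise homotopic continuous maps into the discrete space $\{0,1\}$ is constant and its pointwise limit is continuous, so $f\notin\hcB{\mathbb Q,\{0,1\}}$. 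Your closing caveat shows you sensed this, but it should be stated as a hypothesis, not a reinterpretation: the theorem needs $Y$ path-connected (a restriction the paper's own proof also uses implicitly, since an open set retracting onto a disconnected $Y$ cannot be path-connected), and with that hypothesis your proof is complete and coincides with the paper's.
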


\begin{proof}
We only need to prove the inclusion $ \cB{X,Y}\subseteq h\cB{X,Y}$.

Let $f\in \cB{X,Y}$.
The Kuratowski-Wojdys{\l}awski theorem~\cite[Theorem III.8.1]{BorsukTR} implies that $Y$ is a closed subset of a convex set $E$ in a Banach space.
Then there exist an open subset $G$ of $E$ and a retraction $r:G\to Y$.
According to Theorem~\ref{thm:cover}, $G$ is weakly covered by a metrizable contractible and locally contractible space.
Corollary~\ref{cor:cover} implies that there exists a continuous map $H:X\times [0,+\infty)\to G$ such that $\lim_{n\to\infty} H(x,n)=f(x)$ for all $x\in X$.
Then $r\circ H:X\times [0,+\infty)\to Y$ is a $\Bo$-homotopy for~$f$.
Moreover, since $G$ is contractible, each map $(r\circ H)_t:X\times t \to Y$ is null-homotopic.
\end{proof}

Proposition~\ref{ex:b1_is_not_hb1} and Theorem~\ref{th:X_YANR} imply the following question.
\begin{question}
  Do there exists  a   path-connected subset $X\subseteq\mathbb R^2$ such that $\cB{X,X}\ne \hcB{X,X}$?
\end{question}

\end{document}